\newtheorem{thm}{Theorem}
\newtheorem{cor}{Corollary}
\numberwithin{equation}{section}
\newcommand{\abs}[1]{\lvert#1\rvert}
\newcommand{\gfextn}[0]{pdf}
\DeclareMathOperator{\li}{li}
\begin{document}

\title[Nonnegative trigonometric polynomials]{Nonnegative trigonometric polynomials and a zero-free region for the Riemann zeta-function}
\author{Michael J. Mossinghoff}
\address{Department of Mathematics and Computer Science, Davidson College, Davidson, NC, 28035-6996, USA}
\thanks{The first author was partially supported by a grant from the Simons Foundation (\#210069).}
\email{mimossinghoff@davidson.edu}
\author{Timothy S. Trudgian}
\address{Mathematical Sciences Institute, The Australian National University, ACT 0200, Australia}
\thanks{The second author was partially supported by Australian Research Council DECRA Grant DE120100173.}
\email{timothy.trudgian@anu.edu.au}

\date\today
\subjclass[2010]{Primary: 11M26, 42A05; Secondary: 11Y35}
\keywords{Riemann zeta-function, zero-free region, nonnegative trigonometric polynomials, simulated annealing.}

\begin{abstract}
We prove that the Riemann zeta-function $\zeta(\sigma + it)$ has no zeros in the region $\sigma \geq 1 - 1/(5.573412 \log\abs{t})$ for $\abs{t}\geq 2$.
This represents the largest known zero-free region within the critical strip for $3.06\cdot10^{10} < \abs{t}<\exp(10151.5)$.
Our improvements result from determining some favorable trigonometric polynomials having particular properties, and from analyzing the error term in the method of Kadiri.
We also improve an upper bound in a question of Landau regarding nonnegative trigonometric polynomials.
\end{abstract}

\maketitle

\section{Introduction}\label{sectionIntroduction}
Let $\zeta(s)$ denote the Riemann zeta-function, where throughout this article we write $s=\sigma+it$, with $\sigma$ and $t$ real numbers.
It is well known that $\zeta(s)$ is zero at each negative even integer---these are the trivial zeros of the zeta-function---and that all nontrivial zeros of this function occur in the critical strip in the complex plane, where $0<\sigma<1$.
Further, the nontrivial zeros are symmetric about the line $\sigma=1/2$.
The Riemann hypothesis states that all nontrivial zeros are on this line.

Determining zero-free regions within the critical strip has long been of great interest in number theory, since such results bear directly on questions regarding the distribution of prime numbers.
For example, the prime number theorem was established in 1896 by Hadamard and de la Vall\'ee Poussin by proving that $\zeta(s)$ has no zeros on the line $\sigma=1$.
Moreover, de la Vall\'ee Poussin established in 1899 \cite{dlVP} that $\zeta(s)\neq0$ in a region of the form
\[
\sigma > 1 - \frac{1}{R_0\log\abs{t}},
\]
where $R_0$ is a particular constant and $\abs{t}$ is sufficiently large.
We refer to a region of this form as a classical zero-free region of the Riemann zeta-function.

An improved zero-free region for $\zeta(s)$ was established by Korobov \cite{Korobov} and Vinogradov \cite{Vinogradov} in 1958, who showed independently that $\zeta(s)\neq0$ for
\[
\sigma > 1 - \frac{1}{R_1(\log\abs{t})^{2/3}(\log\log\abs{t})^{1/3}}
\]
for some constant $R_1$.
Ford \cite{Ford2000} established an explicit region of this type, showing that one can take $R_1=57.54$ for $\abs{t}\geq3$.
Clearly, Ford's region is asymptotically superior to the classical one, but for values of $\abs{t}$ that are not too large, explicit forms of the classical region provide better bounds on $\sigma$ than Ford's result.
Because of this, explicit forms of the classical zero-free region remain of interest, and find application in number theory: see for instance \cite{Faber,RamMob}.

Table~\ref{tableR0History} exhibits the history of improvements in the value of $R_{0}$ in the classical region, beginning with de la Vall\'ee Poussin's value, $R_{0}=30.4679$, from 1899, through to the 2005 result of Kadiri \cite{Kadiri}, who established that $R_{0}=5.69693$ is permissible.
Each of these works employed an even trigonometric polynomial $f(\varphi) = \sum_{k=0}^n a_k \cos(k\varphi)$ with real coefficients and possessing certain properties: $f(\varphi)\geq0$ for all real $\varphi$, each coefficient $a_k$ is nonnegative, and $a_1>a_0$.
For example, de la Vall\'ee Poussin used the polynomial $(1+\cos\varphi)^2=\frac{3}{2}+2\cos\varphi+\cos2\varphi$.
After this result, each of the other efforts summarized in this table employed a polynomial of degree $n=4$, except for Kondrat'ev, who employed one of degree $8$, described below.
The trigonometric polynomial used in each of the other results summarized in Table~\ref{tableR0History} is indicated by the values of $b_1$ and $b_2$ in the last two columns: these designate the nonnegative even trigonometric polynomial
$(b_1+\cos\varphi)^2(b_2+\cos\varphi)^2$.

Kondrat'ev's work was motivated by a result of Landau from 1908 \cite{Landau08}.
Let $P_n$ denote the set of even trigonometric polynomials $f_n(\varphi) = \sum_{k=0}^n a_k \cos(k\varphi)$ having nonnegative real coefficients with $a_1>a_0$ and satisfying $f_n(\varphi)\geq0$ for all real $\varphi$.
Define the real number $V_n$ by
\begin{equation}\label{eqnLandauVn}
V_n = \inf_{f_n\in P_n} \frac{f_n(0)-a_0}{(\sqrt{a_1}-\sqrt{a_0})^2}.
\end{equation}
For convenience, we let $A=A(f_n):=f_n(0)-a_0$.
Certainly $V_n$ is decreasing in $n$, so we define
\begin{equation}\label{eqnLandauV}
V = \lim_{n\to\infty} V_n.
\end{equation}
Landau proved that for $\epsilon>0$ one may take
\[
R_0 = \frac{V}{2} + \epsilon
\]
for $\abs{t}>T(\epsilon)$ in the classical zero-free region.
Stechkin \cite{Stech2} improved this in 1970, showing that one may select
\[
R_0 = \frac{V}{2}\left(1-\frac{1}{\sqrt{5}}\right) + \epsilon
\]
for $\abs{t}>T(\epsilon)$, and by using the degree $4$ trigonometric polynomial shown in this entry in Table~\ref{tableR0History}, Stechkin established his value of $R_0=9.65$ for $\abs{t}>12$.

\begin{table}[t]
\centering
\caption{Improvements in the constant $R_{0}$ in the classical region.}\label{tableR0History}
\begin{tabular}{lllll}
&& $R_0$ & $b_1$ & $b_2$\\
\hline
1899 & de la Vall\'ee Poussin \cite{dlVP} & $30.4679$ & $1$ & ---\\
1938 & Westphal \cite{Westphal} & $17.537$ & $1$ & $0.25$\\
1962 & Rosser \& Schoenfeld \cite{RS1} & $17.51631$ & $1$ & $0.3$\\  
1970 & Stechkin \cite{Stech2} & $9.65$ & $0.91$ & $0.28$\\
1975 & Rosser \& Schoenfeld \cite{RS2} & $9.64591$ & $1$ & $0.3$\\
1977 & Kondrat'ev \cite{Kond} & $9.547897$ & \multicolumn{2}{c}{(see Table~\ref{tableKondratev})}\\
2002 & Ford \cite{Ford2000} & $8.463$ & $0.9$ & $0.225$\\
2005 & Kadiri \cite{Kadiri} & $5.69693$ & $0.91$ & $0.265$\\
\hline
\end{tabular}
\end{table}

Kondrat'ev \cite{Kond} employed a computational strategy to search for trigonometric polynomials $f_n\in P_n$ that would produce improved bounds on $V_n$, and thus on $R_0$.
While the details of his method of optimization are not recorded in his 1977 article, his search at $n=8$ produced the polynomial $k_8(\varphi)$, whose normalized coefficients are listed in Table~\ref{tableKondratev}.

\begin{table}[t]
\centering
\caption{Coefficients of the polynomial of Kondrat'ev, $k_8(\varphi)$.}\label{tableKondratev}
\begin{tabular}{ll}
\hline
$a_0$ & $1$\\
$a_1$ & $1.733792817542616$\\
$a_2$ & $1.110484293773627$\\
$a_3$ & $0.4895739485699287$\\
$a_4$ & $0.1180328991868943$\\
$a_5$ & $7.549474144412732\cdot10^{-9}$\\
$a_6$ & $7.994175811779779\cdot10^{-10}$\\
$a_7$ & $0.009253861629263798$\\
$a_8$ & $0.004429241403972788$\\
$A$ & $3.465567070455195$\\
\hline
\end{tabular}
\end{table}

Kadiri \cite{Kadiri} employed a more complicated analysis to obtain the value $R_0=5.69693$.
We review her method in Section~\ref{sectionKadiriMethod}.
In this article, we obtain an improved value for $R_0$ by using Kadiri's method, with two main improvements.
First, we construct an improved trigonometric polynomial of degree $n=16$, using the randomized optimization method of simulated annealing to search for favorable polynomials.
Second, we analyze the error term of this method and obtain some savings by employing additional computations.
We prove the following theorem.

\begin{thm}\label{thmR0}
There are no zeros of $\zeta(\sigma+it)$ for $|t|\geq 2$ and
\[
\sigma > 1 - \frac{1}{5.573412 \log |t|}.
\]
\end{thm}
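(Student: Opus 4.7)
The plan is to follow the framework of Kadiri (recalled in Section~\ref{sectionKadiriMethod}) for producing an explicit classical zero-free region, strengthening it in two essentially independent directions: by constructing a nonnegative trigonometric polynomial $f\in P_{16}$ that outperforms those used by previous authors, and by sharpening the control on the error term that arises when one bounds $\zeta'/\zeta$ via its Hadamard product.

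The general mechanism is as follows. Suppose $\beta+i\gamma$ is a hypothetical nontrivial zero of $\zeta$ with $\beta$ close to $1$. For any $\sigma>1$ the Euler product for $-\zeta'/\zeta$ combined with the nonnegativity of $f$ gives
\[
\sum_{k=0}^{n} a_k \Re\!\left(-\frac{\zeta'}{\zeta}(\sigma+ik\gamma)\right) \geq 0.
\]
Expanding each term via the Hadamard factorization isolates the pole contribution $a_0/(\sigma-1)$, a term $-a_1/(\sigma-\beta)$ coming from the assumed zero at $k=1$, and sums over the remaining zeros that must be bounded explicitly. Rearranging, together with a Stechkin-type choice of $\sigma$, yields an inequality of the shape $1/(1-\beta) \leq R_0\log\abs{\gamma}$, where the constant $R_0$ depends both on the coefficients of $f$ and on the explicit error estimates. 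Driving $R_0$ below $5.573412$ therefore reduces to (i) finding an $f\in P_{16}$ whose coefficients make the associated functional small, and (ii) making every constant in the explicit error bound as sharp as possible.

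For (i), since $P_n$ is nonconvex and the objective is not amenable to closed-form minimization, I would use simulated annealing: at each step propose a small random perturbation of the coefficient vector, accept or reject according to the change in the resulting $R_0$, and slowly cool a temperature parameter, with multiple restarts to escape local minima. Nonnegativity of $f$ would be enforced by a fine grid check during the search and certified rigorously for the final candidate via interval arithmetic or an explicit sum-of-squares decomposition, and one must also verify $a_k\geq 0$ and $a_1>a_0$. For (ii), I would refine Kadiri's bound by splitting off the contribution of zeros lying close to $\sigma+ik\gamma$ and handling them directly from verified numerical data for the low-lying zeros of $\zeta$, while re-optimizing the auxiliary truncation and height parameters so as to match the chosen polynomial.

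The main obstacle is precisely the coupling between (i) and (ii): the optimal polynomial depends on the shape of the error estimate, while the error estimate depends in turn on $f$, so the simulated annealing has to be run against an objective that is itself the output of a delicate explicit calculation, and the two steps must be iterated to consistency. A secondary difficulty is that, once a candidate polynomial is produced in floating point at degree $16$, one must certify the positivity conditions and carry every explicit estimate (including the dependence on $\abs{t}\geq 2$) through with interval arithmetic to rigorous standard, so that the final constant $R_0=5.573412$ is genuinely provable rather than merely numerically plausible.
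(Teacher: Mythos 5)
Your high-level plan matches the paper's actual strategy almost exactly: assume a hypothetical zero just outside a known zero-free region, combine the classical nonnegativity inequality for $\sum a_k\Re(-\zeta'/\zeta)(\sigma+ik\gamma)$ with a Stechkin/Kadiri-style analysis of the Hadamard product, search for a favorable polynomial in $P_{16}$ by simulated annealing, and sharpen the explicit error estimates using verified zero data, iterating the two improvements to consistency. Two implementation choices, however, differ from what the paper does and are worth flagging, because they materially affect feasibility. First, you propose to drive the annealing directly by the resulting $R_0$; the paper instead uses Landau's much cheaper functional $G(f)=A/(\sqrt{a_1}-\sqrt{a_0})^2$ as a surrogate objective during the search (since each full evaluation of Kadiri's $R_0$ is far too slow for the millions of trials involved), and only afterward screens the many candidate polynomials by the true $R_0$---and indeed observes that the $V_n$-optimal and $R_0$-optimal polynomials are not the same. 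Second, you would enforce $f\geq0$ by grid checking with post-hoc certification; the paper instead parametrizes $f(\varphi)=\lvert\sum_k c_k e^{ik\varphi}\rvert^2$, so nonnegativity is automatic by construction (the Fej\'er--Riesz form), and only $a_k\geq0$ and $a_1>a_0$ need checking. This parametrization also makes incremental coefficient updates $O(n)$ per annealing step, which is what makes the search tractable. On the error-term side, the paper's concrete gains come from introducing a splitting parameter $\eta_1$ that yields $R_0\leq\frac{A}{2}g_1(\theta)(1-\kappa)\min_{\eta_1}\max\{K(w_1,\theta)^{-1},(K(w_0,\theta)-C(\eta_1))^{-1}\}$, and from bounding the zero-sum $\Sigma(t,t_0)$ via Lehman's explicit formula and a direct computation over low-lying zeros rather than via explicit bounds on $N(T)$; you gesture at this but would need these specific devices to reach $5.573412$.
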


This represents the largest known zero-free region for the zeta-function within the critical strip for $3.06\cdot10^{10} < \abs{t} < \exp(10151.5)$.

Our work also allows us to improve a bound on Landau's quantity $V$ from \eqref{eqnLandauV}.
Exact values for $V_n$ are known only for $n\leq6$; for the general case, Arestov and Kondrat'ev proved in 1990 \cite{ArKon} that
\begin{equation}\label{eqnAK90}
34.468305 < V < 34.5035864.
\end{equation}
More information on what is known regarding values of $V_n$ and similar quantities may be found in the survey of R\'ev\'esz \cite{Rev}.
In this article, we determine improved values for $V_n$ for a number of values of $n$, and reduce the gap in \eqref{eqnAK90} by approximately $40\%$.

\begin{thm}\label{thmV}
Let $P_n$ denote the set of even trigonometric polynomials $f_n(\varphi)=\sum_{k=0}^n a_k\cos(k\varphi)$ of degree $n$ having each $a_k\geq0$ and $a_1>a_0$, and let
\[
V = \lim_{n\to\infty} \inf_{f_n\in P_n} \frac{f_n(0)-a_0}{(\sqrt{a_1}-\sqrt{a_0})^2}.
\]
Then $34.468305 < V < 34.4889920009$.
\end{thm}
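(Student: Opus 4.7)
The plan is straightforward once one observes that only the upper bound on $V$ requires new work, since the lower bound $34.468305 < V$ is already recorded in \eqref{eqnAK90}. The sequence $\{V_n\}$ is decreasing and $V = \lim V_n$, so $V \le V_n$ for every $n$, and each $V_n$ is itself an infimum. Thus it suffices to exhibit a single admissible polynomial $f_n \in P_n$ for some convenient $n$ whose ratio $(f_n(0) - a_0)/(\sqrt{a_1} - \sqrt{a_0})^2$ lies below $34.4889920009$. The introduction signals that the authors work at degree $n = 16$.

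First I would carry out a numerical search at degree $16$ using the simulated-annealing framework advertised in the introduction. A convenient parameterization is the Fej\'er--Riesz representation $f_n(\varphi) = \abs{q(e^{i\varphi})}^2$ for an algebraic polynomial $q$ of degree $n$; this guarantees $f_n(\varphi) \ge 0$ automatically, reducing the constraints to the linear conditions $a_k \ge 0$ and $a_1 > a_0$, which can be enforced by a penalty. I would run simulated annealing on the coefficients of $q$ with cost equal to the target ratio, applying shrinking random perturbations and an exponential cooling schedule, restarting many times to escape local minima, and seeding some runs from known good polynomials such as Kondrat'ev's $k_8$ from Table~\ref{tableKondratev} padded by zero coefficients up to degree $16$.

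Given a candidate coefficient vector $(a_0, \dots, a_{16})$, three items must be certified rigorously: the linear inequalities $a_k \ge 0$ and $a_1 > a_0$, which are trivial; the value of the ratio, handled by interval arithmetic on the two square roots; and the nonnegativity of $f_{16}(\varphi)$. For the last item I would use the explicit Fej\'er--Riesz factorization produced during the search, verifying the identity $f_{16}(\varphi) = \abs{q(e^{i\varphi})}^2$ by interval arithmetic on the coefficient residuals. The main obstacle is precisely this certification: a near-optimal polynomial is expected to touch zero at several angles in $[0, \pi]$, so $q$ carries roots on the unit circle and round-off can easily perturb a verified bound across zero. Building the candidate from $q$ directly, rather than recovering $q$ from $f_{16}$, sidesteps this difficulty entirely, leaving only the ratio itself to be certified numerically.
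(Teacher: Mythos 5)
Your overall strategy is the same as the paper's: the lower bound $34.468305 < V$ is taken directly from Arestov and Kondrat'ev \eqref{eqnAK90}, and the upper bound is obtained by exhibiting a single admissible polynomial, parameterized in the Fej\'er--Riesz form $f_n(\varphi) = \bigl\lvert \sum_{k=0}^n c_k e^{ik\varphi}\bigr\rvert^2$ so that nonnegativity is automatic, with $c_k$'s found by simulated annealing and the ratio certified numerically. That is precisely the paper's construction and its certification scheme (Section~\ref{sectionAnneal}, and the $c_k \mapsto a_k$ transformation there). On those points your plan is correct and identical in spirit.

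The concrete gap is your choice of degree. You fixed on $n = 16$ on the strength of the introduction, but the degree-$16$ polynomial is the one used for the zero-free region in Theorem~\ref{thmR0}, not for Theorem~\ref{thmV}. The paper's Table~\ref{tableBestVn} records the best upper bounds the authors obtained for $V_n$ at each degree: $V_{16} < 34.49747$, $V_{24} < 34.49028$, $V_{36} < 34.48931$, and only at $n=40$ (after additional fine annealing) does one reach $V_{40} < 34.488992000856$, which is what is needed for the stated bound $V < 34.4889920009$. Since $34.49747 > 34.4889920009$, no polynomial of degree $16$ in $P_{16}$ --- indeed nothing up through degree $36$ found by these searches --- will certify the theorem's upper bound; the search has to be carried out at degree $40$ (the polynomial $F_{40}$ of Figures~\ref{figureF40Ck}--\ref{figureF40Ak}). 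Your method would work unchanged once aimed at the right degree, but as written the plan falls short of the claimed constant.
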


In Section~\ref{sectionKadiriMethod}, we summarize Kadiri's method, and we describe some improvements we make to this method in Section~\ref{sectionImprovements}.
Section~\ref{sectionErrorTerm} provides details on our bounding of the error term in Kadiri's method.
Section~\ref{sectionAnneal} describes our methods for searching for improved trigonometric polynomials by using simulated annealing. 
Section~\ref{sectionResults} reports on the results of our searches, describes the reduction in the gap of permissible values for $V_n$ for a number of integers $n$, and establishes Theorems~\ref{thmR0} and~\ref{thmV}.
It also investigates one possible avenue for further improvements.
Finally, Section~\ref{sectionApplications} briefly discusses some applications of these results.

\section{Kadiri's method}\label{sectionKadiriMethod}

We summarize the method of Kadiri \cite{Kadiri} for reducing the constant $R_0$ in the classical zero-free region of the Riemann zeta-function.
We use the same notation.
This strategy relies on a number of parameters.
First, one requires a trigonometric polynomial $f_n\in P_n$. 
Next, we let $T_0$ be a height to which the Riemann hypothesis has been verified, and we let $R$ be a positive constant for which the classical zero-free region of the Riemann zeta-function has already been established.
Also, we let $r<R$ be a positive constant, and we let $t_0$ denote a constant in $[1, T_0]$ that may be selected later.

We suppose that $\zeta(s)$ has a zero $\rho_0 = \beta_0+i\gamma_0$ with $\gamma_0>0$ which lies just outside an established zero-free region of the zeta-function, so that
\[
1 - \frac{1}{r\log \gamma_{0}}\leq \beta_{0}\leq 1 - \frac{1}{R\log \gamma_{0}}.
\]
We aim to show that $\beta_0$ in fact satisfies a stronger inequality of the form
\[
\beta_{0}\leq 1 - \frac{1}{R_0\log \gamma_{0}}
\]
for some $R_0\in(r,R)$.
The parameter $r$ thus represents a lower bound on what we might achieve.

We now define
\begin{equation}\label{eqnEtaSigmaW}
\begin{split}
\eta & = \frac{1}{r\log\gamma_{0}}, \quad  \sigma = 1 - \frac{1}{R\log(n\gamma_{0} + t_{0})}, \quad w =  \frac{1-\sigma}{\eta},\\
\eta_{0} & = \frac{1}{r\log T_{0}}, \quad  \sigma_{0} = 1 - \frac{1}{R\log(n T_{0} + t_{0})}, \quad
w_{0} = \frac{1-\sigma_{0}}{\eta_{0}},
\end{split}
\end{equation}
and, for any fixed $\theta\in(\pi/2,\pi)$, we define the function $h_\theta(u)$ by
\begin{equation*}\label{eqnHtheta}
\begin{split}
h_{\theta}(u) = \sec^{2}\theta& \bigg\{\sec^{2}\theta\left(\frac{-\theta}{\tan \theta} - \frac{u}{2}\right)\cos(u\tan\theta) - \frac{2\theta}{\tan \theta} - u\\
&\quad- \frac{\sin(2\theta + u\tan \theta)}{\sin2\theta} + 2\left(1 + \frac{\sin(\theta + u\tan\theta)}{\sin\theta}\right)\bigg\}.
\end{split}
\end{equation*}
We then set $g_1(\theta)=h_\theta(0)$, so that
\[
g_1(\theta) = \sec^2(\theta)\left(3-\theta\tan\theta - 3\theta\cot\theta\right),
\]
and we let
\[
d_1(\theta) = -\frac{2\theta}{\tan\theta}
\]
and
\[
m(\theta) = \max\left\{\abs{h''_\theta(u)} : 0\leq u\leq d_1(\theta)\right\}.
\]
Next, we set $\delta$ to be the solution in $[0,1]$ to the equation $\kappa_2(\delta)=\kappa_3(\delta)$, where
\[
\kappa_{2}(\delta) = \kappa_{2}(\delta, \theta) = \frac{g_{1}(\theta)(2\sigma_{0} - 1) - \frac{m(\theta)\eta_{0}^{2}}{2\sigma_{0} -1}}{(1+ 2\delta) g_{1}(\theta)+ \left(\frac{1}{\delta} + \frac{1}{\delta + 2\sigma_{0} - 1}\right)m(\theta)\eta_{0}^{2}}
\]
and
\[
\kappa_{3}(\delta) = \kappa_{3}(\delta, \theta) = \frac{g_{1}(\theta) (2\sigma_{0} - 1) - \frac{m(\theta)\eta_{0}^{2}}{2\sigma_{0} -1}}{\left(\frac{1}{\delta} + \frac{1+ \delta}{(\delta + 2\sigma_{0} - 1)^{2}}\right) g_{1}(\theta) + \left(\frac{1}{\delta^{3}} + \frac{1}{(\delta + 2\sigma_{0} - 1)^{3}}\right)m(\theta)\eta_{0}^{2}},
\]
noting the additional requirements that
\[
\frac{\sqrt{5} -1}{2}\leq \delta\leq 0.866
\]
and
\[
\frac{1}{\delta} + \frac{1}{0.99 + \delta} \leq \frac{1}{\kappa} \leq \frac{1}{\delta^3} + \frac{1}{(1+ \delta)^3}.
\]
In fact, one may select $0\leq\kappa\leq\min\{\kappa_2(\delta), \kappa_3(\delta)\}$, and $\delta\geq\delta'$ if $\delta'$ is the solution in $[0,1]$ of $\kappa_2(\delta)=\kappa_3(\delta)$, but we find that the described selections for $\delta$ and $\kappa$ satisfy the required constraints in our applications.

Finally, let
\[
K(w, \theta) = \int_0^{d_1(\theta)}(a_1 e^{-u} - a_0) h_{\theta}(u) e^{wu}\, du,
\]
where $a_0$ and $a_1$ are the coefficients of the first two terms of the selected trigonometric polynomial $f_n\in P_n$.
It then follows from the arguments given in \cite{Kadiri} that
\begin{equation}\label{eqnKadiriMaster}
R_0 \leq \frac{A g_1(\theta) (1-\kappa)}{2(K(w,\theta) - C(\eta))},
\end{equation}
where $C(\eta)$ is a small error term, and we recall that $A=f_n(0)-a_0$.
Kadiri proved that $K(w,\theta)$ is increasing for $w\geq w_0$ for each fixed $\theta\in(\pi/2,\pi)$, and that $C(\eta)$ is decreasing and nonpositive for $0\leq\eta\leq\eta_0$.
It follows that
\begin{equation}\label{eqnKadiriMainIneq}
R_0 \leq \frac{A g_1(\theta) (1-\kappa)}{2K(w_{0},\theta)}
\end{equation}
for any fixed $\theta\in(\pi/2,\pi)$.
Kadiri selected Rosser and Schoenfeld's value \cite{RS2} for $R$, so $R=9.645908801$, and chose $r=5$, $T_0=3\,330\,657\,430.697$ (as established in \cite{Wed}), $t_0=10$, $\theta=1.848$, and the trigonometric polynomial of degree $n=4$ shown in Table~\ref{tableR0History}.
Kadiri then obtained the value $5.69693$ by using an iterative procedure.

First, one computes an initial improvement in $R_0$ from \eqref{eqnKadiriMainIneq}, and then one replaces $R$ by this $R_0$, and increases the value of $r$ as well, and performs the computation again.
Note that changing $R$ and $r$ in the method alters the values of $\eta_0$, $\sigma_0$, and $w_0$, and hence the values of $\kappa$ and $K(w_0,\theta)$, and so a new value for $R_0$ is obtained from \eqref{eqnKadiriMainIneq}.
The value chosen for $r$ must remain smaller than any improved value for $R_0$.
This procedure is repeated until the $r$ and $R_0$ values lie within a tolerance of $\Delta=10^{-5}$, and then this iteration is repeated, with $r$ reset to $5$ and $R$ set to the latest value of $R_0$.
This outer iteration is then continued until $R_0$ no longer improves by more than $v=5\cdot10^{-6}$. 
Using Kadiri's trigonometric polynomial and the selected values for $T_0$, $t_0$ and $\theta$, after six iterations this method produces the value $R_0=5.696924085\ldots$\,.

\section{Improvements}\label{sectionImprovements}

Our improvement in the value of $R_0$ over Kadiri's result has three main sources.
First, we employ a new trigonometric polynomial.
Second, we save some information in the error term $C(\eta)$ to use in the iterative calculation.
Third, we adjust some of the other parameters in kind, and exploit some more recent knowledge about the zeros of the zeta-function.

We employ the trigonometric polynomial $F_{16}(\varphi)=\sum_{k=0}^{16} a_k \cos(k\varphi)$, whose coefficients are listed in Table~\ref{tableBestForR0} in Section~\ref{sectionResults}.
A graph of this polynomial over $(\pi/2,\pi)$ is shown in Figure~\ref{figureBestK16}.
The method used to construct this polynomial is described in Section~\ref{sectionAnneal}, and this method guarantees that $F_{16}(\varphi)\geq0$ for all $\varphi$.
It is evident from Table~\ref{tableBestForR0} that each $a_k\geq0$ and $a_1>a_0$, so $F_{16}\in V_{16}$.

To obtain some savings from the error term $C(\eta)$, we consider a new parameter $\eta_1$ with $0<\eta_1\leq\eta_0$.
When $\eta\leq\eta_0$, we have $\gamma_0\geq\exp(1/r\eta_1)$ by \eqref{eqnEtaSigmaW}, and since $\log x/\log (n x + t_0)$ is increasing in $x$ for fixed $n$ and $t_0$, it follows that
\[
w = \frac{r\log\gamma_0}{R\log(n\gamma_0+t_0)} \geq \frac{1/R\eta_1}{\log(n\exp(1/r\eta_1)+t_0)} =: w_1.
\]
Thus, for $\eta\leq\eta_1$ we may bound the denominator in \eqref{eqnKadiriMaster} below by $K(w_1,\theta)$.

Since $C(\eta)$ is decreasing on $[0,\eta_0)$, we have $C(\eta)\leq C(\eta_1)$ for $\eta_1\leq\eta\leq\eta_0$, so in this range the denominator in \eqref{eqnKadiriMaster} is bounded below by $K(w_0,\theta)-C(\eta_1)$.
Therefore, we can replace the bound \eqref{eqnKadiriMainIneq} by
\begin{equation}\label{eqnImprovedBound}
R_{0}\leq \frac{A}{2} g_1(\theta)(1-\kappa) \left(\min_{0< \eta_{1} \leq \eta_{0}} \max\left\{\frac{1}{K(w_1, \theta)}, \frac{1}{K(w_{0}, \theta) - C(\eta_{1})}\right\}\right).
\end{equation}
The function $C(\eta)$ is described in detail in Section~\ref{sectionErrorTerm}.

We select $T_0=3.06\cdot10^{10}$ as established in \cite{PlattPi}, $t_0=10^5$, $\theta=1.85573$, $r=5$, and $R=5.7$.
After each iteration, we reset $r$ to the average of its current value and the value of $R_0$ just obtained, and continue this inner process until the distance between these two values is less than $\Delta=10^{-6}$.
The outer iteration is continued until $R_0$ no longer improves by more than $v=5\cdot10^{-7}$.
In addition, in each computation of $R_0$ a binary search is employed to determine a value for $\eta_1\in(0,\eta_0]$: we search for a value of $\eta_1$ in this range where $K(w_1,\theta)$ and $K(w_0,\theta)-C(\eta_1)$ differ by no more than $\epsilon=10^{-3}$.
Table~\ref{tableR0Computation} lists the values of several parameters in the computation at the end of each inner iteration, so just before $r$ is reset to $5$ and $R_0$ is assigned to $R$.
This table exhibits the values for $\eta_0$, $\eta_1$, $\kappa$, and $\delta$ in each iteration as well.
We obtain the value $R_0=5.5734118005\ldots$ after seven rounds; an eighth round produces $R_0=5.57341178\ldots$\,.

We remark that using the new trigonometric polynomial, combined with the strategy of saving some information in the error term, accounts for approximately $93.6\%$ of the improvement in the value of $R_0$ obtained here.
The remaining $6.4\%$ is due to the larger value of $T_0$ that we employ.

\begin{table}[t]
\caption{Values of parameters in successive runs of the outer iteration.
All values are rounded.
}\label{tableR0Computation}
\begin{tabular}{cccccccc}
$R$ & $r$ & $\eta_0\cdot 10^3$ & $\eta_1\cdot10^3$ & $\kappa$ & $\delta$ & $R_0$\\
\hline
$5.7000000$ & $5.58682$ & $7.41347$ & $0.861315$ & $0.440100$ & $0.620251$ & $5.5868212$\\
$5.5868212$ & $5.57486$ & $7.42938$ & $0.876546$ & $0.439964$ & $0.620293$ & $5.5748558$\\
$5.5748558$ & $5.57357$ & $7.43109$ & $0.878187$ & $0.439949$ & $0.620298$ & $5.5735676$\\
$5.5735676$ & $5.57343$ & $7.43128$ & $0.878364$ & $0.439948$ & $0.620298$ & $5.5734286$\\
$5.5734286$ & $5.57341$ & $7.43130$ & $0.878383$ & $0.439948$ & $0.620298$ & $5.5734136$\\
$5.5734136$ & $5.57341$ & $7.43130$ & $0.878385$ & $0.439948$ & $0.620298$ & $5.5734120$\\
$5.5734120$ & $5.57341$ & $7.43130$ & $0.878386$ & $0.439948$ & $0.620298$ & $5.5734118$\\
\end{tabular}
\end{table}

\section{Analyzing the error term}\label{sectionErrorTerm}

We describe the error term $C(\eta)$ from \eqref{eqnKadiriMaster} in some detail, highlighting aspects where our estimates differ from those used in \cite{Kadiri}.
Following Kadiri, we write $C(\eta) = C_{1}(\eta) + C_{2}(\eta) + C_{3}(\eta) + C_{4}(\eta)$, and describe each of these summands in turn.
First, equations (55) and (56) in \cite{Kadiri} produce $C_{1}(\eta) = \eta g_1(\theta)\sum_{k=0}^{n}a_{k} c_{1}(k) $, where
\[
c_{1}(0) = \frac{\kappa-1}{2} \log \pi + \frac{1}{2} \frac{\Gamma'}{\Gamma} \left(\frac{3}{2}\right) - \frac{\kappa}{2} \frac{\Gamma'}{\Gamma} \left(\frac{\sigma_{0} + \delta}{2} + 1\right)
\]
and
\[
c_{1}(k) = \frac{\kappa-1}{2} \log \frac{2\pi}{k} + \frac{1}{2} \min\{ r_{2}(\sigma_{0} + 2, 3, kT_{0}), r_{3}(\sigma_{0} + 2, 3, kT_{0})\}
\]
for $1\leq k\leq n$, where
\begin{equation*}
\begin{split}
r_{2}(x_{0}, x_{1}, y_{0}) &= \frac{1-\kappa}{2} \log\left\{1 + \frac{(x_{1} + \delta)^{2}}{y_{0}^{2}}\right\} + \frac{1}{y_{0}} \left( \tan^{-1} \frac{y_{0}}{x_{1}} + \kappa \tan^{-1} \frac{y_{0}}{x_{1} + \delta}\right),\\
r_{3}(x_{0}, x_{1}, y_{0}) &= \frac{1}{3y_{0}} \left( \frac{1}{x_{0}} + \frac{\kappa}{x_{0} + \delta}\right) + \frac{x_{1}^{2} + \kappa(x_{1} + \delta)^{2}}{2y_{0}^{2}}.
\end{split}
\end{equation*}

Second, from equation (60) in \cite{Kadiri} we have $C_{2}(\eta) = q_{1}\eta + q_{2}\eta^{2} + q_{3}\eta^{3}$, where
\begin{equation*}
\begin{split}
q_{1} &= - \kappa g_1(\theta)\left\{\frac{a_0}{\delta} + \frac{\sigma_{0} - 1 + \delta}{2} \sum_{k=1}^{n} \frac{a_{k}}{(kT_{0})^{2}}\right\},\\
q_{2} &= M^*(-r/R,\theta) \sum_{k=1}^{n} \frac{a_{k}}{(kT_{0})^{2}},\\
q_{3} &= \frac{a_{0} m(\theta)\kappa}{(\sigma_{0} - 1 + \delta)^{3}} + \frac{m(\theta)\kappa}{\sigma_{0} -1 + \delta} \sum_{k=1}^{n} \frac{a_{k}}{(kT_{0})^{2}}.
\end{split}
\end{equation*}
Here, $M^*(z,\theta)$ is an upper bound on
\[
M(z,\theta) =  \int_{0}^{d_{1}(\theta)} |h_{\theta}''(u)| e^{-zu}\, du.
\]
To compute a value for $M^*(z,\theta)$ in an efficient way when $z$ varies and $\theta$ is fixed, we compute each of the integrals
\[
M_k(\theta) = \int_0^{d_1(\theta)} |h_{\theta}''(u)| u^k \, du
\]
once at the start of the computation, and then the fact that $e^y \leq 1 + y + y^2/2 + y^3/3.45$ for $0\leq y\leq 1.91094\ldots$ implies that we may take
\[
M^*(z,\theta) = M_0(\theta) - M_1(\theta) z + \frac{M_2(\theta) z^2}{2} - \frac{M_3(\theta) z^3}{3.45} 
\]
with $z=-r/R$ since $0<r<R$, provided that $d_1(\theta) \leq y_0$.
This last condition is satisfied for $1.8469\ldots < \theta <\pi$, and the values of $\theta$ that we employ lie within this range.
Naturally, we must take care to round $M_0(\theta)$ and $M_2(\theta)$ up at the last decimal place of the calculated precision, and $M_1(\theta)$ and $M_3(\theta)$ down.
(This is similar to the analysis in \cite{Kadiri}, but here we add the $z^3$ term for a sharper estimate.)

Third, equations (52), (53), and (54) in \cite{Kadiri} produce $C_{3}(\eta) = p_{1}\eta + p_{2}\eta^{2} + p_{3}\eta^{3}$, where
\begin{equation*}
\begin{split}
p_{1} &= a_{1} g_1(\theta) \left\{ \left( \frac{1}{\delta} + \frac{1}{\sigma_{0} - \eta_{0} + \delta}\right)\kappa - 1\right\} + M^*(-r/R,\theta)\eta_{0} \sum_{k=0}^{n} a_{k} c_{30}(kT_{0}, t_0),\\
p_{2} &= \frac{(1+ 2\kappa)m(\theta)\eta_{0}}{\sigma_{0} - \frac{1}{2}} \sum_{k=0}^{n} a_{k} c_{30}(kT_{0}, t_0),\\
p_{3} &= a_{1}m(\theta)\left\{ \left(\frac{1}{\delta^{3}} + \frac{1}{(\sigma_{0} - \eta_{0} + \delta)^{3}}\right)\kappa + 1\right\},
\end{split}
\end{equation*}
and where $c_{30}(t,t_0)$ is required to be an upper bound on the sum
\begin{equation}\label{sigma}
\Sigma(t, t_{0}) = \sum_{\substack{\zeta(\beta+i\gamma)=0\\|\gamma| \geq t+ t_{0}}} \frac{1}{(\gamma - t)^{2}}.
\end{equation}
We depart from \cite{Kadiri} here in obtaining an expression for $c_{30}(t,t_0)$: Kadiri employs an explicit bound on the error term for the function $N(T)$, the number of nontrivial zeros of the Riemann zeta-function with imaginary part in $[0,T]$.
We pursue another approach.
For $t=0$, we have
\[
\Sigma(0,t_0) = 2\left(\sum_{\gamma>0} \frac{1}{\gamma^2} - \sum_{0<\gamma\leq t_0} \frac{1}{\gamma^2}\right),
\]
where all the sums are over the pertinent zeros of the zeta-function in the critical strip.
From \cite[Lemma 2.9]{STDPiLi} we have that the first sum in this last expression is at most $0.023105$, and we may compute the latter sum explicitly for a reasonable value of $t_0$.
Using this strategy, we compute that we may take $c_{30}(0, 10^5)=0.00027$.
For $t>0$, we note that
\[
\Sigma(t) = \sum_{\gamma \geq t + t_{0}} \frac{1}{(\gamma - t)^{2}} + \sum_{\gamma\geq t + t_{0}} \frac{1}{(\gamma + t)^{2}},
\]
and we use a result of Lehman \cite{LehmanPiLi}, which allows us to write
\[
\sum_{\gamma> T_{1}} \phi(\gamma) = \frac{1}{2\pi} \int_{T_{1}}^{\infty} \phi(x) \log (x/2\pi)\, dx + \xi \left\{ 4 \phi(T_{1}) \log T_{1} + 2 \int_{T_{1}}^{\infty} \frac{\phi(x)}{x}\, dx\right\},
\]
where $\xi$ denotes a constant of absolute value at most $1$, when $\phi(x)$ is continuous, positive, and monotone decreasing on $x>T_1>2\pi e$.
Using $\phi(x)=1/(x-t)^2 + 1/(x+t)^2$, and ignoring negligible savings, we find that we may take
\begin{equation*}
\begin{split}
c_{30}(kT_0) &= \frac{1}{2\pi} \int_{t_{0}}^{\infty} \log\left(\frac{x+kT_0}{2\pi}\right) \left( \frac{1}{x^{2}} + \frac{1}{(x+2kT_0)^{2}}\right)\, dx\\
&\qquad + 4 \log(kT_0 + t_{0})\left( \frac{1}{t_{0}^{2}} + \frac{1}{(t_{0} + 2kT_0)^{2}}\right) + \frac{4}{kT_0 t_{0}}
\end{split}
\end{equation*}
for $1\leq k\leq n$.

Last, equations (61) and (64) in \cite{Kadiri} yield
\[
C_{4}(\eta) = \eta^{3}m(\theta)\sum_{k=0}^{n} a_{k} \left\{ C_{41}( k) + C_{42}(k)\right\},
\]
where
\begin{equation*}
\begin{split}
C_{41}(k) &= \frac{1}{2\pi (\sigma_{0} - \frac{1}{2})} \int_{-\infty}^{\infty} \frac{U_{0}(t)}{(\sigma_{0} - \frac{1}{2})^{2} + (kT_{0} - t)^{2}}\, dt\\
&\qquad+ \frac{\kappa}{2\pi (\sigma_{0} - \frac{1}{2} + \delta)} \int_{-\infty}^{\infty} \frac{U_{0}(t)}{(\sigma_{0} - \frac{1}{2} + \delta)^{2} + (kT_{0} - t)^{2}}\, dt,\\
C_{42}(0) &= \frac{1}{\sigma_{0}^{3}} + \frac{\kappa}{(\sigma_{0} + \delta)^{3}}, \quad
C_{42}(k) = \left(\frac{1}{\sigma_{0}} + \frac{\kappa}{\sigma_{0} + \delta}\right) \frac{1}{(kT_{0})^{2}}, \quad \textrm{if } k\geq 1,
\end{split}
\end{equation*}
and, by Lemma 3.6 of \cite{Kadiri},
\begin{equation}
U_{0}(t) \leq \begin{cases} \frac{1}{2} \log \left( \frac{16}{1+ 4t^{2}}\right) + \frac{2}{1+ 4t^{2}} + 2, \quad & \mbox{if } |t|< \frac{1}{2},\\
\bigl| \log \frac{|t|}{2} - \frac{2}{1+ 4t^{2}}\bigr| + \frac{2}{3|t|} + \frac{1}{8t^{2}}, \quad & \mbox{if } |t|\geq \frac{1}{2}.
\end{cases}
\end{equation}
We estimate the first integral in the definition of $C_{41}(k)$ as follows; the second integral is estimated similarly.
We write this integral as
\[
I=\int_{-\infty}^{\infty} \frac{U_{0}(t)}{a^{2} + (b-t)^{2}}\, dt,
\]
where $a = \sigma_{0} - \frac{1}{2}$ and $b = kT_{0}$.
Note that $\log \frac{|t|}{2} - \frac{2}{1+ 4t^{2}}\geq0$ for $|t|\geq t^{*} = 2.205\ldots$\,.
When $k=0$ we make use of the symmetry inherent in the integrand to write
\begin{equation*}
\int_{-\infty}^\infty \frac{U_0(t)}{a^2+t^2}\,dt =
2\left(\int_{0}^{1/2} \frac{U_0(t)}{a^2+t^2}\,dt +
\int_{1/2}^{t^*} \frac{U_0(t)}{a^2+t^2}\,dt +
\int_{t^*}^{\infty} \frac{U_0(t)}{a^2+t^2}\,dt\right).
\end{equation*}
When $k\neq 0$ we write
\begin{equation*}
\left(\int_{-\infty}^{-t^{*}} + \int_{t^{*}}^{-\frac{1}{2}}+ \int_{-\frac{1}{2}}^{-\frac{1}{2}}+ \int_{\frac{1}{2}}^{t^{*}}+ \int_{t^{*}}^{\infty}\right) \frac{U_{0}(t)}{a^{2} + (b-t)^{2}}\, dt = I_{1} + I_{2} + I_{3} + I_{4} + I_{5},
\end{equation*}
say.
We may estimate each of $I_2$, $I_3$, and $I_4$ numerically, noting that the formula used for $U_{0}(t)$ is different in each of these integrals. 
For $|t|\geq t^*$, we find that
\[
U_0(|t|) \leq \log|t| - c,
\]
and the optimal value of $c$ is
\[
c=\log2 + \frac{2}{1+4{t^*}^2}-\frac{2}{3t^*}-\frac{1}{8{t^*}^2} = 0.462935\ldots.
\]
Thus, we obtain
\begin{align*}
I_{1} =\int_{-\infty}^{-t^*} \frac{U_0(t)}{a^2+(b-t)^2}\,dt
&< \int_{-\infty}^{-t^*} \frac{\log(-t)-c}{(b-t)^2}\,dt\\
&= \frac{\log(b+t^*)}{b} - \frac{t^*\log t^*}{b(b+t^*)} - \frac{c}{b+t^*}.
\end{align*}
All that remains is to estimate $I_{5}$.
First, we note that
\[
\int_{t^*}^\infty \frac{c}{a^2+(b-t)^2}\,dt = \frac{c}{a}\left(\tan^{-1}\left(\frac{b-t^*}{a}\right)+\frac{\pi}{2}\right).
\]
We then divide the range into three sections: $[t^{*}, b(1-\varepsilon)]$, $[b(1-\varepsilon), b(1+\varepsilon)]$, and $[b(1+\varepsilon), \infty)$, for a parameter $\varepsilon>0$.
We compute
\begin{align*}
\int_{t^*}^{b(1-\varepsilon)} &\frac{\log t}{a^2+(b-t)^2}\,dt
< \int_{t^*}^{b(1-\varepsilon)} \frac{\log t}{(b-t)^2}\,dt\\
&\qquad= \frac{1}{b}\left(\frac{\log b}{\varepsilon} - \log(b-t^*) + \log\varepsilon - \left(1-\frac{1}{\varepsilon}\right)\log(1-\varepsilon) - \frac{t^*\log t^*}{b-t^*}\right),
\end{align*}
and then
\[
\int_{b(1-\varepsilon)}^{b(1+\varepsilon)} \frac{\log t}{a^2+(b-t)^2}\,dt
 < \log(b(1+\varepsilon))\cdot\frac{2}{a}\tan^{-1}\left(\frac{b\varepsilon}{a}\right) < \frac{\pi}{a}\log(b(1+\varepsilon)),
\]
and finally
\[
\int_{b(1+\varepsilon)}^{\infty} \frac{\log t}{a^2+(b-t)^2}\,dt
< \int_{b(1+\varepsilon)}^{\infty}\frac{\log t}{(b-t)^2}\,dt
= \frac{1}{b\varepsilon}\left(\log b + \varepsilon\log\left(1+\frac{1}{\varepsilon}\right) + \log(1+\varepsilon)\right).
\]
Therefore, we find that
\begin{align*}
&\int_{-\infty}^\infty \frac{U_0(t)}{a^2+(b-t)^2}\,dt
< \int_{-t^*}^{-\frac{1}{2}} \frac{U_0(t)}{a^2+(b-t)^2}\,dt+
\int_{-\frac{1}{2}}^{\frac{1}{2}} \frac{U_0(t)}{a^2+(b-t)^2}\,dt\\
&\qquad + \int_{\frac{1}{2}}^{t^*} \frac{U_0(t)}{a^2+(b-t)^2}\,dt + \frac{\pi}{a} \log(b(1+\varepsilon))\\
&\qquad + \frac{1}{b} \left(\frac{2\log b}{\varepsilon} + \left(1+\frac{1}{\varepsilon}\right)\log(1+\varepsilon) - \left(1-\frac{1}{\varepsilon}\right)\log(1-\varepsilon) + \log\left(\frac{b+ t^*}{b-t^*}\right)\right)\\
&\qquad -\frac{t^*\log t^*}{b}\left(\frac{1}{b+t^*}+\frac{1}{b-t^*}\right) - c\left(\frac{1}{b+t^*} + \frac{1}{a}\left(\tan^{-1}\left(\frac{b-t^*}{a}\right)+\frac{\pi}{2}\right)\right).
\end{align*}
We choose $\varepsilon=10^{-3}$, and thus find that we can bound the term $C_{41}(k)$ relatively easily.

\section{Searching for trigonometric polynomials}\label{sectionAnneal}

We employ simulated annealing to search for favorable trigonometric polynomials in this problem.
In this optimization method, one seeks to minimize an objective function $G$ defined on some (typically multidimensional) domain $D$.
One begins at a particular location, and moves through the space in search of favorable values of $G$ by using an iterative process.
At each step, one moves from the current location to a nearby point in the domain $D$, and measures the change in the value of $G$.
If the objective function decreased, i.e.\ we witnessed an improvement, then we keep this step and proceed to the next iteration.
On the other hand, if we find that $G$ increased due to this move, then we may undo this step, but we may choose to keep it, depending on a particular probability distribution.
This distribution depends on a parameter called the \textit{temperature}: if the change $\Delta G$ in the objective function is positive, then we keep this step with probability $\exp(-\Delta G/Y)$, where $Y$ is the current temperature.
Thus, if the temperature is high, then we are reasonably likely to keep steps that increase the objective function by a small amount, but less likely if we witness a large change.
As the temperature decreases, we become less likely to accept small increases in the value of $G$.
In the limiting case, as $Y\to0^+$, the method becomes simple greedy descent.
Typically, one performs a number of trials for each value in a decreasing sequence of temperatures (the \textit{annealing schedule}), ending with a number of trials of simple greedy descent.

Simulated annealing requires many evaluations of the objective function, so it is very helpful if this function can be computed quickly.
In this problem, where the domain $D$ is the set $P_n$ of trigonometric polynomials of degree $n$ having the required properties, using Kadiri's method to evaluate the merit of a particular polynomial in this space is rather time-consuming.
Instead, we use Landau's function from \eqref{eqnLandauVn}, so for $f\in P_n$ we let
\begin{equation}\label{eqnLandauObjFcn}
G(f) = \frac{A}{a_0+a_1-2\sqrt{a_0 a_1}},
\end{equation}
where $A$ again denotes $f(0)-a_0$.
Note that writing the denominator in this way allows $G$ to be evaluated with only one call to a square-root function.

In our method, we fix the degree $n$, we set $c_0=1$, and then choose each $c_k$ for $1\leq k\leq n$ uniformly at random from $[0,B]$ for  a prescribed bound $B$, and then set
\[
f(\varphi) = \Bigl\lvert\sum_{k=0}^n c_k e^{i k \varphi}\Bigr\rvert^2.
\]
We then compute values $a_0$, \ldots, $a_n$ so that $f(\varphi)=\sum_{k=0}^n a_k \cos(k\varphi)$.
Here,
\[
a_0 = \sum_{j=0}^n c_j^2,\;\;
a_k = 2\sum_{j=0}^{n-k} c_j c_{j+k}
\textrm{\ for $1\leq k\leq n$}.
\]
We note that $a_0$ is the peak autocorrelation of the sequence $\{c_k\}$, and $a_k/2$ for $k>1$ is the $k$th off-peak aperiodic autocorrelation of this sequence.
(In fact, it is slightly more efficient to keep track of these autocorrelations in the code, rather than the $a_k$, but this is a minor point.)
We then test if $f\in P_n$, so we test if $a_1>a_0$ and whether each $a_k\geq0$, since $f(\varphi)\geq0$ for all $\varphi$ automatically by the construction.
(Kondrat'ev \cite{Kond} employed this same strategy to ensure choosing trigonometric polynomials that are nonnegative everywhere.)
If this test fails, then we start over with a new randomly selected $f$.
Once we have a viable polynomial, we proceed with the annealing phase.

To describe the annealing process, we find it convenient to set $Z=1/Y$, where $Y$ is the temperature.
We begin with a particular maximum step size $S$ and an initial value for $Z$.
We then perform $M$ annealing trials: each time we randomly select an integer $k\in\{1,\ldots,n\}$, we pick a step size $s\in[-S,S]$ uniformly at random, and then we change $c_k$ by $s$.
We then update each coefficient $a_j$ that depends on $c_k$, so $a_0$ increases by $s(2c_k+s)$, $a_k$ increases by $2s$, each $a_i$ with $1\leq i<k$ increases by $2sc_{k-i}$, and each $a_i$ with $1\leq i<n-k$ increases by $2sc_{k+i}$.
Thus, each change to a value of $c_k$ requires $O(n)$ operations to update the state of the program.
We then check if the new values of $a_i$ are all nonnegative, and that $a_1>a_0$.
If any of these conditions fails, we undo this step and return to the prior polynomial $f$.
Otherwise, our new location is valid, so we compute the value of the objective function \eqref{eqnLandauObjFcn}.
If $\Delta G<0$, we keep this step, and if $\Delta G>0$, then we keep this step with probability $e^{-Z\Delta G}$.

For each fixed value of the maximum step size $S$, we perform this process $M$ times for each of the $K$ values of $Z$, incrementing $Z$ each time by an amount $\Delta Z$, and then we perform $M$ trials of greedy descent.
After this, we decrease the step size by dividing $S$ by $1+\lambda$, where $\lambda$ is a positive parameter, and continue as long as this maximum size is greater than a prescribed value, $S_0$.

Our selection of parameters in this process varied over a number of runs.
In fact, it was useful to run annealing jobs on many processors simultaneously, with each processor choosing its annealing parameters at random from some prescribed intervals.
This way, over time we could learn which parameters tended to produce better trigonometric polynomials, and this would inform choices of parameters in later runs.
We often chose $B\in[100,200]$, $Z\in[8,16]$, $\Delta Z\in[.5,2]$, $M\in[250n,350n]$, $K=10$ or $11$, $S\in[2.5,4]$, $\lambda\in[.015,.05]$, and $S_0=10^{-5}$ or $10^{-6}$.

\section{Results}\label{sectionResults}

Table~\ref{tableBestVn} records the best value of the objective function we constructed by using this optimization method for each degree $n=4m$, with $2\leq m\leq10$.
Each entry provides the current best-known value for Landau's $V_n$ \eqref{eqnLandauVn}.
In particular, the entry for $n=8$ improves the result of \cite{Kond}, and each subsequent entry is less than the formerly best known upper bound on $V$ of $34.5035\ldots$ from \cite{ArKon}.
At degree $n=40$, our searches produced a polynomial that established $V_{40}<34.48923$.
We then performed some additional simulated annealing passes with finer parameters, using this polynomial as our starting point.
In this way we constructed a trigonometric polynomial $F_{40}$, whose objective value produces $V_{40}<34.488992000856$.
This establishes Theorem~\ref{thmV}.
The coefficient sequence $\{c_k\}$ for $F_{40}(\varphi)$ is shown in Figure~\ref{figureF40Ck}, and the corresponding values for the sequence $\{a_k\}$, after normalizing by dividing by the sum of the squares of the $c_k$ (approximately $268\,761.1$), are displayed in Figure~\ref{figureF40Ak}.
The sum of all the coefficients after $a_0$ in this list is $A=3.490002852278399$.
A plot for $F_{40}(\varphi)$ over $[\pi/2,\pi]$ is exhibited in Figure~\ref{figureBestV40}.

\begin{figure}[t]
\caption{Coefficients $\{c_k\}$ for $F_{40}(\varphi)$.}\label{figureF40Ck}
{\tiny
\begin{equation*}
\begin{split}
\{1, \; &5.82299804516981, 20.256046857268, 41.8371543572416, 62.6803646661328,\\
&86.3371461181984, 140.237920299854, 230.172136828422, 296.566863709684,\\
&259.103949548556, 115.007833826561, -36.3610468402636, -95.0264057388131,\\
&{-58.142464967588}, -2.7847512808475, 17.2100840100492, 10.7193028056306,\\
&5.04620845731704, 3.86024903930945, -0.321719428284046, -4.25534268200327,\\
&{-2.73295949107489}, 1.31013188826865, 1.16950963394944, -0.36353468679768,\\
&{-0.468164164916908}, 0.940864822848766, 0.224723973462492, -0.762490754252658,\\
&{-0.100627491842387}, 0.486777412937381, -0.00593275691352972, -0.399968301710688,\\
&0.225086077352437, 0.198770260275958, -0.239235383947936, -0.0483116501842605,\\
&0.212556439828151, -0.124475686268429, 0.0123641022008911, 0.0146979855952472\}
\end{split}
\end{equation*}
}%
\end{figure}

\begin{figure}[t]
\caption{Normalized coefficients $\{a_k\}$ for $F_{40}(\varphi)$.}\label{figureF40Ak}
{\tiny
\begin{equation*}
\begin{split}
\{1, \; &1.737404932358421, 1.1180312174988238, 0.4958068290777618, 0.12043435038423943,\\
&2.8862586376137162\cdot10^{-15}, 2.0972266792962233\cdot10^{-7}, 0.010264161325212823,\\
&0.005445447694684562, 4.982862012828773\cdot10^{-14}, 4.8092823793119544\cdot10^{-8},\\
&6.247326172856939\cdot10^{-4}, 1.625564268382144\cdot10^{-12}, 2.835025226384611\cdot10^{-7},\\
&6.129588083976596\cdot10^{-4}, 5.231422915759923\cdot10^{-4}, 4.936509000300511\cdot10^{-8},\\
&1.4092336429708759\cdot10^{-13}, 2.923444923088646\cdot10^{-4}, 2.3484834215069657\cdot10^{-4},\\
&1.1342395284997809\cdot10^{-7}, 9.170949177475853\cdot10^{-15}, 1.1678447544116879\cdot10^{-4},\\
&8.482954015115613\cdot10^{-5}, 5.338369918854597\cdot10^{-12}, 1.8115431364172399\cdot10^{-6},\\
&2.274822660800113\cdot10^{-5}, 6.894175279654467\cdot10^{-14}, 1.570310005695605\cdot10^{-7},\\
&3.090195669976266\cdot10^{-5}, 2.75304675142816\cdot10^{-5}, 4.959939074564098\cdot10^{-8},\\
&2.6604018245602642\cdot10^{-14}, 1.643513306111948\cdot10^{-5}, 1.522818206748077\cdot10^{-5},\\
&4.623133724479399\cdot10^{-6}, 7.930664147911865\cdot10^{-7}, 2.6276458478038436\cdot10^{-6},\\
&1.8249928742340992\cdot10^{-6}, 7.289034419080702\cdot10^{-7}, 1.0937584248561976\cdot10^{-7}\}
\end{split}
\end{equation*}
}%
\end{figure}

\begin{table}[t]
\caption{Upper bound on $V_n$ for $n=4m$ and $2\leq m\leq10$, and bound on $R_0$ resulting from choosing this polynomial.}\label{tableBestVn}
\begin{tabular}{clcc}
$n$ & \multicolumn{1}{c}{$V_n <$} & $\theta$ & $R_0 <$\\
\hline
$8$  & $34.53991919$ & $1.853$ & $5.58139$\\
$12$ & $34.50266054$ & $1.855$ & $5.57429$\\
$16$ & $34.49747009$ & $1.855$ & $5.57490$\\
$20$ & $34.49321564$ & $1.855$ & $5.57495$\\
$24$ & $34.49027559$ & $1.855$ & $5.57519$\\
$28$ & $34.48959029$ & $1.855$ & $5.57560$\\
$32$ & $34.48939230$ & $1.855$ & $5.57632$\\
$36$ & $34.48930967$ & $1.855$ & $5.57683$\\
$40$ & $34.488992001$ & $1.855$ & $5.57724$\\ 
\hline
\end{tabular}
\end{table}

\begin{figure}[t]
\caption{$F_{40}(\varphi)$ for $\pi/2\leq\varphi\leq\pi$.}\label{figureBestV40}
\includegraphics[width=4in]{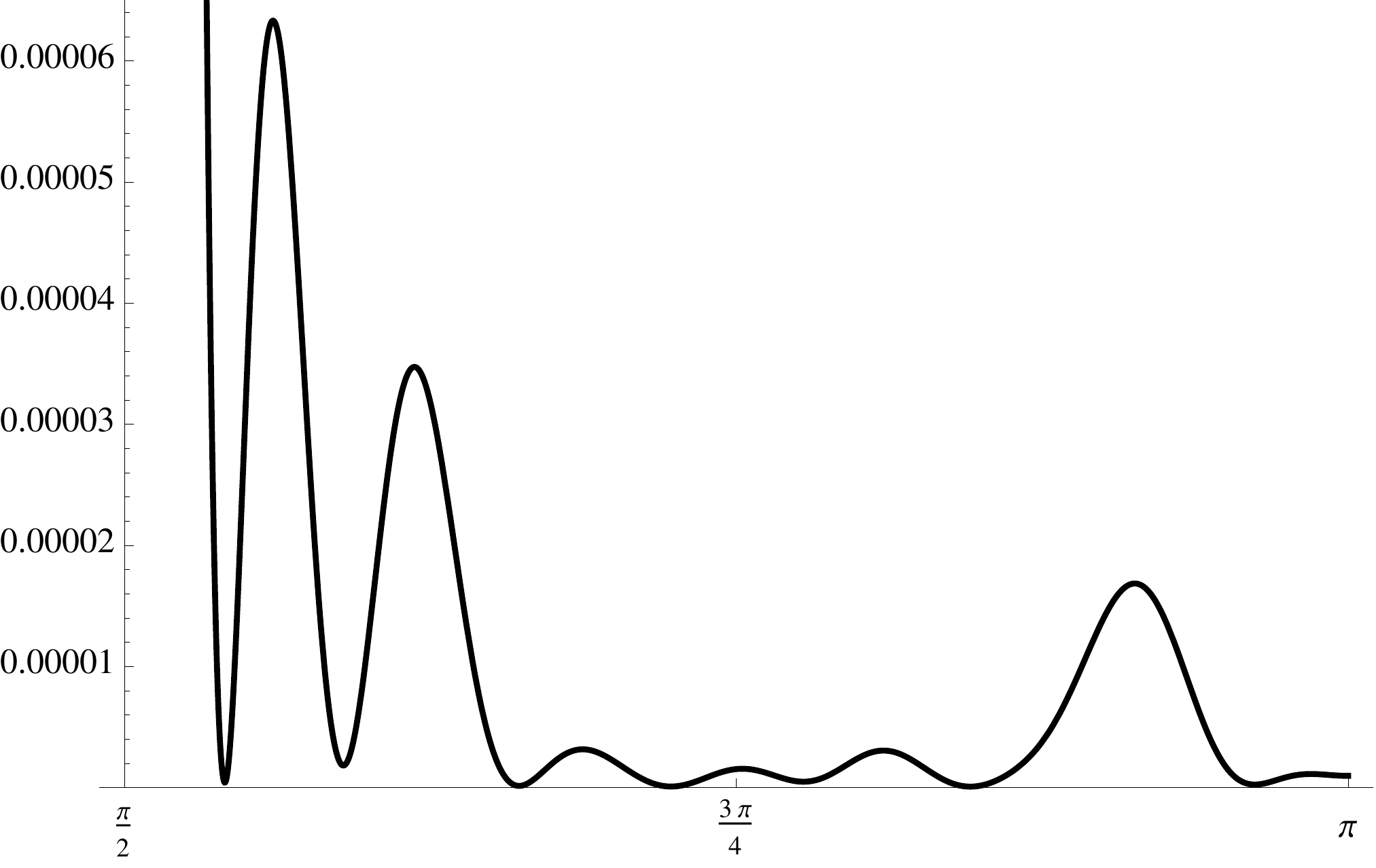}
\end{figure}

We may now apply Kadiri's method for computing a value for $R_0$ in the classical zero-free region of the Riemann zeta-function using each of the polynomials that produced the records for $V_n$, and employing the savings obtained in the error term as described in Section~\ref{sectionImprovements}.
These values are also listed in Table~\ref{tableBestVn}, along with the value of $\theta$ that was employed in each case.
The best value here appears at $n=12$, and some further local optimizations allow us to derive $R_0 < 5.57422$.
It is interesting that polynomials that produce an improvement in Landau's value $V$ do not necessarily produce an improvement in $R_0$ under Kadiri's method, but this is perhaps not so surprising since we use Landau's function as a surrogate for our true objective function, owing to its easy calculation.
This suggests that we test the several hundred thousand other trigonometric polynomials that were produced by our method of simulated annealing, which exhibited very good values under Landau's objective function.
Indeed, many of these produce a value for $R_0$ better than $5.57422$.
The best polynomial found in these searches has degree $16$, and produces $R_0 < 5.57353$.
Some further annealing using this polynomial as a starting point produces the polynomial $F_{16}(\varphi)$ employed in Section~\ref{sectionImprovements} to establish the bound in Theorem~\ref{thmR0}.
The sequences of coefficients $c_k$ and $a_k$ are both displayed in Table~\ref{tableBestForR0}, where again we have normalized the $a_k$ by dividing by the sum of the squares of the $c_k$ (approximately $932\,023.9$), so that $a_0=1$.

We remark that Landau's objective function \eqref{eqnLandauObjFcn} evaluated on $F_{16}(\varphi)$ produces $34.49997\ldots$, which is somewhat larger than the record obtained for this degree, as shown in Table~\ref{tableBestVn}.

\begin{table}[t]
\caption{Coefficients for $F_{16}(\varphi)$.}\label{tableBestForR0}
\begin{tabular}{cl||cl}
\hline
$c_0$ & $1$ & $a_0$ & $1$\\
$c_1$ & $-2.09100370089199$ & $a_1$ & $1.74126664022806$\\
$c_2$ & $0.414661861733616$ & $a_2$ & $1.128282822804652$\\
$c_3$ & $4.94973437766435$ & $a_3$ & $0.5065272432186642$\\
$c_4$ & $-2.26052224951171$ & $a_4$ & $0.1253566902628852$\\
$c_5$ & $-8.58599241204357$ & $a_5$ & $9.35696526707405\cdot10^{-13}$\\
$c_6$ & $6.87053689828658$ & $a_6$ & $4.546614790384321\cdot10^{-13}$\\
$c_7$ & $22.6412990090005$ & $a_7$ & $0.01201214561729989$\\
$c_8$ & $-6.76222005424994$ & $a_8$ & $0.006875849760911001$\\
$c_9$ & $-50.2233943767588$ & $a_9$ & $7.77030543093611\cdot10^{-12}$\\
$c_{10}$ & $8.07550113395201$ & $a_{10}$ & $2.846662294985367\cdot10^{-7}$\\
$c_{11}$ & $223.771572768515$ & $a_{11}$ & $0.001608306592372963$\\
$c_{12}$ & $487.278135806977$ & $a_{12}$ & $0.001017994683287104$\\
$c_{13}$ & $597.268928658734$ & $a_{13}$ & $2.838909054508971\cdot10^{-7}$\\
$c_{14}$ & $473.937203439807$ & $a_{14}$ & $5.482482041999887\cdot10^{-6}$\\
$c_{15}$ & $237.271715181426$ & $a_{15}$ & $2.412958794855076\cdot10^{-4}$\\
$c_{16}$ & $59.6961898512813$ & $a_{16}$ & $1.281001290654868\cdot10^{-4}$\\
& & $A$ & $3.523323140225021$\\
\hline
\end{tabular}
\end{table}

\begin{figure}[t]
\caption{$F_{16}(\varphi)$ for $\pi/2\leq\varphi\leq\pi$.}\label{figureBestK16}
\includegraphics[width=4in]{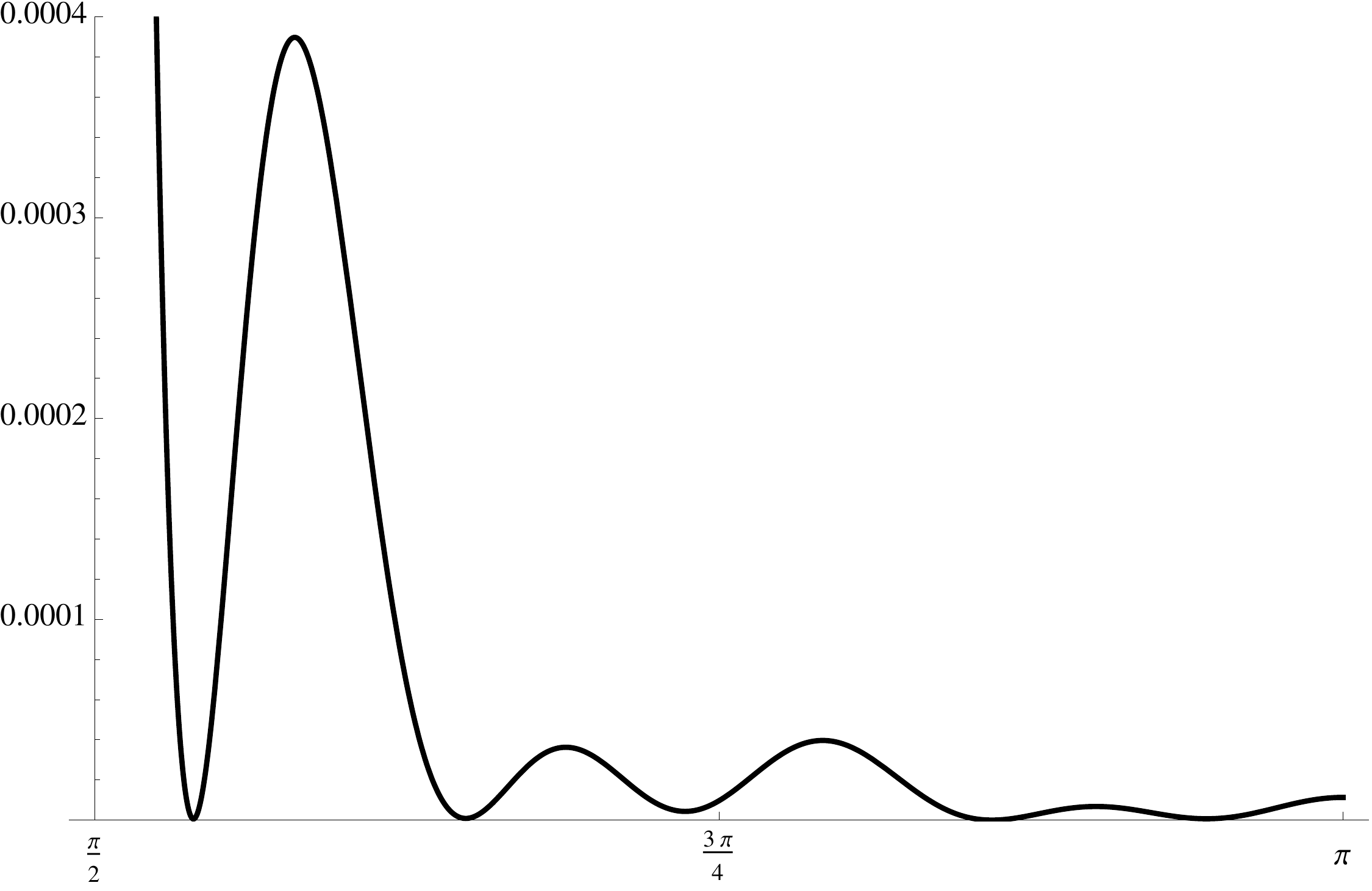}
\end{figure}

\subsection{Potential improvements}\label{subsectionHorizon}

While it appears that the scope for selecting an improved trigonometric polynomial may be limited, we see that the height $T_0$ to which the Riemann hypothesis has been verified has some influence on the width of the zero-free region that one can obtain by using this method.
We might ask then how much improvement we might expect in $R_0$ from this method as the parameter $T_0$ increases.
For example, if $T_0=3\cdot10^{11}$, then by choosing $\theta=1.85567$ with $F_{16}(\varphi)$, we obtain that $R_0=5.5666305$ is permissible.\footnote{The value $T_{0} = 3\cdot10^{11}$
has been announced by Jan B\"{u}the and Jens Franke in a personal communication.}

We also applied this method for $150$ values for $T_0$ up to approximately $10^{300}$, using $F_{16}(\varphi)$ as the trigonometric polynomial, with $t_0=10^5$, $r=5$, and $\theta=1.8552$.
We found that the value of $R_0$ that one may obtain in this region is well-approximated by
\[
R_0 = 5.4912 + \frac{2.0185}{\log T_0},
\]
suggesting that, as larger values of $T_0$ are established, the potential improvement in the width of the classical zero-free region that one may obtain by using this method is limited.

\section{Applications}\label{sectionApplications}

Theorem~\ref{thmR0} has a number of applications.
We first describe two improvements concerning the error term in the prime number theorem that follow almost immediately from this result.
\begin{cor}\label{TSwan}
Let 
\begin{equation*}\label{ep2}
\epsilon_{0}(x) = \sqrt{\frac{8}{17 \pi}} X^{1/2}e^{-X}, \quad X = \sqrt{(\log x)/R}, \quad R = 6.315.
\end{equation*}
Then
\begin{equation*}
\begin{split}
|\theta(x) - x| &\leq x \epsilon_{0}(x), \quad x \geq 149,\\
|\psi(x) - x| &\leq x \epsilon_{0}(x), \quad x \geq 23.
\end{split}
\end{equation*}
\end{cor}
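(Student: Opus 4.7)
The plan is to deduce Corollary~\ref{TSwan} by feeding Theorem~\ref{thmR0} into the by-now standard framework established by Schoenfeld \cite{RS2} (extended in subsequent works), which converts a classical zero-free region of the form $\sigma \geq 1 - 1/(R_0\log|t|)$ for $|t|\geq T_0$, together with a verification of the Riemann hypothesis up to height $T_0$, into an explicit bound of exactly the form $|\psi(x)-x|\leq x\epsilon_0(x)$ with $\epsilon_0(x) = \sqrt{8/(17\pi)}\,X^{1/2}e^{-X}$ and $X=\sqrt{(\log x)/R}$. The source of the shape of $\epsilon_0$ is the optimization of a truncated explicit formula: one starts from the identity
\[
\psi(x) - x = -\sum_{|\gamma|\leq T}\frac{x^\rho}{\rho} + O\!\left(\frac{x\log^2(xT)}{T}\right),
\]
estimates the zero-sum by splitting the zeros according to whether $|\gamma|\leq T_0$ or $|\gamma|>T_0$, applies the zero-free region bound on the non-critical strip contribution, and then chooses $T$ to balance the two error sources. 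The exponent $X=\sqrt{(\log x)/R}$ and the prefactor $\sqrt{8/(17\pi)}$ arise precisely from this balancing, with $17=2\cdot 8.5$ originating from a standard density-of-zeros bound $N(T+1)-N(T)\leq a\log T$.

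The execution steps are then as follows. First, I would reproduce Schoenfeld's argument verbatim, recording the explicit dependence of every constant on $R_0$, $T_0$, and on the constants in an upper bound for $N(T)$ (for which one may use recent refinements, e.g.\ those in \cite{STDPiLi}). Second, substitute $R_0 = 5.573412$ from Theorem~\ref{thmR0} and $T_0 = 3.06\cdot10^{10}$ from \cite{PlattPi}, and numerically optimize the free parameters (the height $T$ at which the explicit formula is truncated, and any implicit auxiliary constants) so as to minimize the value of $R$ in the resulting bound. This should yield that $R=6.315$ is admissible for all $x$ beyond some computable threshold $x_0$. Third, pass from $\psi$ to $\theta$ via $\theta(x)=\psi(x) - \sum_{k\geq 2}\psi(x^{1/k})$, and note that the tail contributions are negligible relative to $x\epsilon_0(x)$ for large $x$. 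Finally, handle the remaining range $x\in[23,x_0]$ (respectively $[149,x_0]$ for $\theta$) by a direct numerical computation: one enumerates primes and prime powers up to $x_0$ and verifies the pointwise inequality, using that both sides change only at prime powers and that $x\epsilon_0(x)-(x-\psi(x))$ is monotonic between jumps.

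The main obstacle is bookkeeping rather than conceptual: carefully tracking every constant through the Schoenfeld-type derivation with the new, smaller $R_0$ and the much larger $T_0$, so that the resulting $R$ actually comes out as small as $6.315$ and not larger. A secondary, more practical difficulty is extending the small-$x$ verification up to the crossover threshold $x_0$, which may be large enough that some care is required in the numerical computation of $\psi(x)$ and $\theta(x)$; however, existing tables and the well-developed methodology for such computations (as used in, e.g., \cite{Faber,RamMob}) make this step routine. Beyond that, the proof is essentially an application of Theorem~\ref{thmR0} within a known black-box machine.
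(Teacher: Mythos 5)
Your overall picture---truncated explicit formula, Schoenfeld-type balancing, small-$x$ verification---correctly describes the \emph{ancestry} of the bound, but you miss the one observation that actually does the work in the paper's proof, and as a result your plan never explains where the specific number $R=6.315$ comes from. The paper's proof is two lines: Theorem~\ref{thmR0}, combined with the verification of the Riemann hypothesis up to $T_0=3.06\cdot10^{10}$, is first converted into a zero-free region of the Rosser--Schoenfeld shape, namely $\sigma \geq 1 - 1/(6.315\log(|t|/17))$ for $t\geq 24$; then \cite{Trudgianprime} is cited as a black box, because that paper is already set up to turn a zero-free region of exactly this form (with constant $R$) into the bound $|\psi(x)-x|\leq x\epsilon_0(x)$ with the same $R$ appearing in $X=\sqrt{(\log x)/R}$. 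The conversion step is where $6.315$ is produced: for $t>T_0$ one needs $R\log(t/17) \geq 5.573412\log t$, and since $\log t/\log(t/17)$ decreases in $t$, the extremal case is $t=T_0$, giving $R \approx 5.573412\cdot\log T_0/\log(T_0/17)\approx 6.315$; for $24\leq t\leq T_0$ the verified Riemann hypothesis supplies the region directly, and the threshold $t\geq24$ appears because $1 - 1/(6.315\log(t/17)) > 1/2$ forces $t>17e^{2/6.315}\approx 23.3$.

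Your proposal instead treats $6.315$ as the \emph{output} of a fresh numerical optimization of the entire Schoenfeld machinery with inputs $R_0=5.573412$ and $T_0$. If you tried to carry that through you would find that the explicit-formula estimates of \cite{RS2,Trudgianprime} are stated for a $\log(t/17)$-shaped zero-free region, so you would be forced to do the conversion above anyway---and the value $6.315$ is precisely the cost of that shape mismatch, not of truncating or balancing the explicit formula. Your guess that the $17$ comes from a density estimate $N(T+1)-N(T)\leq a\log T$ is also not what is happening: the $17$ is a fixed constant of the Rosser--Schoenfeld zero-free region that \cite{Trudgianprime} inherits, and the paper itself points this out by remarking afterward that any future zero-free region proved directly in the $\sigma\geq 1-1/(R\log(t/B))$ form with $B>1$ would improve the corollary further, precisely because it would eliminate this conversion loss. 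Finally, the passage from $\psi$ to $\theta$ and the small-$x$ verification are already handled inside \cite{Trudgianprime}; there is no need to redo them.
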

\begin{proof}
Theorem~\ref{thmR0} can be used, as in \cite[p.\ 2]{Trudgianprime}, to show that there are no zeros of the Riemann zeta-function in the region 
\begin{equation}\label{zero2}
\sigma \geq 1 - \frac{1}{6.315 \log|t/17|}, \quad t\geq 24.
\end{equation}
The statement then follows from the arguments in \cite{Trudgianprime}.
\end{proof}
Here, the value $6.315$ replaces the value $6.455$ from \cite{Trudgianprime}.
The same substitution occurs in the following improvement on a result from this same article, where in addition the leading constant term is sharpened here from $0.2795$ to $0.2593$.
\begin{cor}\label{TLi}
If $x\geq 229$ then
\begin{equation*}
|\pi(x) - \li(x)| \leq 0.2593 \frac{x}{(\log x)^{3/4}} \exp\left(-\sqrt{\frac{\log x}{6.315}}\right).
\end{equation*}
\end{cor}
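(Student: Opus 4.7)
The plan is to follow the argument of \cite{Trudgianprime} essentially verbatim, the only substantive change being to feed the improved constant $R=6.315$ from Corollary~\ref{TSwan} into the same chain of estimates that previously produced the constant $R=6.455$ and coefficient $0.2795$. The starting point is the standard Abel summation identity
\[
\pi(x) - \li(x) = \frac{\theta(x)-x}{\log x} + \int_{2}^{x} \frac{\theta(t)-t}{t(\log t)^{2}}\,dt + C_{1},
\]
valid for some explicit absolute constant $C_{1}$, together with the bound $|\theta(t)-t| \leq t\epsilon_{0}(t)$ from Corollary~\ref{TSwan} for $t \geq 149$. The contribution from $2 \leq t \leq 149$ is a fixed number that can be absorbed once $x$ is large enough, and this is one of the places where the lower threshold $x \geq 229$ enters.

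The boundary term $(\theta(x)-x)/\log x$ is bounded in absolute value by $x\epsilon_{0}(x)/\log x$. Setting $X = \sqrt{(\log x)/R}$ and substituting the definition of $\epsilon_{0}$ rewrites this as
\[
\sqrt{\frac{8}{17\pi}}\cdot\frac{x\, X^{1/2} e^{-X}}{R X^{2}} = \frac{1}{R^{1/4}}\sqrt{\frac{8}{17\pi}}\cdot\frac{x}{(\log x)^{3/4}}\exp\!\left(-\sqrt{\tfrac{\log x}{R}}\right),
\]
so with $R=6.315$ this produces the leading shape of the claimed bound with a coefficient near $0.244$. For the remaining integral, the substitution $u = X(t) = \sqrt{(\log t)/R}$ turns the integrand into a constant multiple of $u^{-5/2}\exp(Ru^{2}-u)$; since $2Ru - 1 > 0$ on the relevant range, one integration by parts shows the integral is bounded by a constant times $x(\log x)^{-7/4}\exp(-X)$, i.e.\ one full power of $\log x$ smaller than the boundary term. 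Adding these two contributions, together with the absolute constants from Abel summation and the tail from $[2,149]$, fixes the final coefficient.

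The main obstacle is not analytic but explicit: one must track every lower-order piece (the $O(1)$ Abel constant, the finite integral over $[2,149]$, the error from replacing $2Ru-1$ by $2Ru$ after integration by parts, and so on) tightly enough to close the gap between the asymptotic leading coefficient $\sqrt{8/(17\pi)}R^{-1/4}\approx 0.2441$ and the claimed $0.2593$ already at $x=229$. This is handled exactly as in \cite{Trudgianprime} with the new value of $R$ substituted throughout, and the threshold $x \geq 229$ is then confirmed by a direct numerical check of the inequality using tabulated values of $\pi(x)$ and $\li(x)$ on the initial range, which is routine.
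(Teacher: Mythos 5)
The overall architecture you describe --- Abel summation from $\pi - \li$ to $\theta - t$, feeding in the bound $|\theta(t)-t| \le t\epsilon_0(t)$ of Corollary~\ref{TSwan}, and substituting $u = \sqrt{(\log t)/R}$ to control the integral --- is indeed the skeleton of the argument in \cite{Trudgianprime}. But you have not identified where the coefficient improvement from $0.2795$ to $0.2593$ actually comes from, and you implicitly attribute it to the wrong source.

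Notice that replacing $R = 6.455$ by $R = 6.315$ makes the asymptotic leading factor $\sqrt{8/(17\pi)}\,R^{-1/4}$ slightly \emph{worse}, not better (it rises from about $0.2427$ to about $0.2441$), so the headline change of $R$ cannot by itself shrink the coefficient. Your proposal then hand-waves the gap between $\approx 0.2441$ and $0.2593$ by asserting it is ``handled exactly as in \cite{Trudgianprime} with the new value of $R$ substituted throughout.'' That is not enough: running the old proof verbatim with the new $R$ would leave the lower-order error budget essentially unchanged and would land somewhere near $0.28$, not at $0.2593$. The actual proof uses a genuinely new input, namely the result from \cite{PlattTrudgianTheta} that $\theta(t) < t$ for all $t < 1.39\times 10^{17}$. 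Because $\theta(t)-t$ is known to be negative on that very long initial range, the contribution of the range $[2, x_0]$ in the Abel-summation integral can be controlled much more sharply (the integrand there has a known sign), which is exactly where the previous coefficient was bleeding. The paper also re-optimizes the splitting parameters to $\alpha = 1.70$ and $x_0 = 1.5\times 10^8$ to take advantage of this. Your proposal, which treats the tail over $[2,149]$ as ``a fixed number that can be absorbed,'' never exploits the sign information on the long initial range and never adjusts the splitting point, so it cannot reach $0.2593$; this is the missing idea.
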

\begin{proof}
The proof is almost identical to that in \cite{Trudgianprime}.
We make use of the more sophisticated result from \cite{PlattTrudgianTheta} that $\theta(t) < t$ for all $t< 1.39\times 10^{17}$, and the proof follows as in \cite{Trudgianprime} by choosing $\alpha = 1.70$ and $x_{0} = 1.5\times 10^{8}$.
\end{proof}

It is worthwhile to note that a zero-free region of the form
\begin{equation}\label{parma}
\sigma \geq 1- \frac{1}{R \log(t/B)},
\end{equation}
for $t\geq 2$, where $B\geq1$ is a fixed constant, is asymptotically equivalent to the form analyzed in this paper.
Were one to prove (\ref{parma}) for some $B>1$ with $R=R_0$, for the value of $R_0$ derived in this paper, then one could further improve on Corollaries~\ref{TSwan} and~\ref{TLi}.

Finally, we very briefly note two additional applications of Theorem~\ref{thmR0}.
First, Faber and Kadiri \cite{Faber}, continuing the work of Schoenfeld \cite{Schoenfeld}, have used the width of this zero-free region \textit{inter alia} to obtain good bounds for the Chebyshev functions $\theta(x) = \sum_{p\leq x} \log p$ and $\psi(x) = \sum_{n\leq x} \Lambda(n)$, where $\Lambda(n)$ is von Mangoldt's function.
Second, Ramar\'{e} \cite[Thm.\ 1.1]{RamMob} employs Kadiri's value of $R_0$ when bounding $\sum_{n\leq x} \Lambda(n)/n$.
These results may be updated by using the smaller value of $R_0$ from Theorem~\ref{thmR0}.

\section*{Acknowledgements}

We thank the Institute for Computational and Experimental Research in Mathematics and the Center for Computation and Visualization at Brown University for computational resources.

\bibliographystyle{plain}
\bibliography{themastercanada}

\end{document}